\newcommand{\CP}{\mathds{C}\mathrm{P}}
\newcommand{\CH}{\mathds{C}\mathrm{H}}
\newcommand{\ol}{\mathrm{Hol}}
\newcommand{\f}{\rightarrow}                  
\newcommand{\C}{\mathds{C}}            
\newcommand{\de}{\partial}
\newtheorem{theor}{Theorem}
\newtheorem{lem}[theor]{Lemma}
\begin{document}

\title{Berezin--Engli\v{s}' quantization of Cartan--Hartogs domains}
\author[M. Zedda]{Michela Zedda}
\address{Dipartimento di Matematica ``Giuseppe Peano'' Universit\`{a} di Torino,
Via Carlo Alberto 1, 09124 Torino, Italy}
\email{michela.zedda@gmail.com}
\thanks{
The author was supported by the project FIRB ``Geometria Differenziale e teoria geometrica delle funzioni'' and by G.N.S.A.G.A. of I.N.d.A.M.}
\date{}
\subjclass[2000]{53D05;  53C55.} 
\keywords{Berezin's quantization; diastasis; Cartan-Hartogs domains.}

\begin{abstract}
We prove the existence of a  Berezin-Engli\v{s} quantization for Cartan--Hartogs domains.  
\end{abstract}

\maketitle
\section{Introduction and statement of the main result}

Let $(M, \omega)$ be a symplectic manifold and let $\{\cdot, \cdot \}$ be the associated Poisson bracket.
A {\em Berezin quantization} (we refer to \cite{Ber1} for details) on $M$ is given by a family of associative algebras $\mathcal A_\hbar$,
where the parameter $\hbar$ (which plays the role of Planck constant) ranges over a set $E$ of positive reals with limit point $0$, such that in the direct sum $\oplus_{h \in E} \mathcal{A}_h$ with component-wise product $*$, there exists a subalgebra ${\mathcal A}$ satisfying the following properties:
\begin{itemize}
\item[$(i)$] for any element 
$f=f(\hbar)\in {\mathcal A}$, where $f(\hbar)\in {\mathcal A}_\hbar$, there exists a limit $\lim_{\hbar\rightarrow 0} f(\hbar)=\varphi (f)\in C^{\infty}(\Omega)$,
\item[$(ii)$]
for $f, g \in {\mathcal A}$
$$\varphi (f*g)=\varphi (f)\varphi (g),\    \    \  \varphi\left(\hbar^{-1}(f*g-g*f)\right)=i\{\varphi(f), \varphi (g)\},$$
\item[$(iii)$]
for any pair of points $x_1,x_2 \in \Omega$ there exists $f \in \mathcal{A} $ such that $\varphi (f) (x_1) \neq \varphi(f)(x_2)$.
\end{itemize}

Consider now a real analytic noncompact K\"ahler manifold $M$ endowed with a K\"ahler metric $g$ and let $\Phi$ be a (real analytic) K\"ahler potential for $g$, i.e. in a neighborhood of a point $p\in M$ the K\"ahler form $\omega$ associated to $g$ can be written $\omega=\frac{i}{2}\partial\bar \partial \Phi$.
One can extend $\Phi$ to a sesquianalytic function $\Phi(x,\bar  y)$ on a neighborhood $U$ of the diagonal of $M \times M$, in such a way that $\Phi(x,\bar  x)=\Phi(x)$, and define the {\em Calabi's diastasis function} $D_g$ on $U$ by:
\begin{equation}\label{diastasis}
 D_g(x,y)=\Phi(x,\bar  x) + \Phi(y,\bar  y) - \Phi(x,\bar  y) - \Phi(y,\bar  x).
\end{equation}
Observe that $D_g$ is independent from the potential chosen, which is defined up to the addition with the real part of a holomorphic function.
Moreover,  it is easily seen that $D_g$ is real-valued,  symmetric in $x$ and $y$ and $D_g(x, x)=0$ (see \cite{Cal} for details and further results).

For $\alpha>0$ consider the weighted Bergman space $\mathcal{H}_\alpha$ of square integrable holomorphic functions on $M$ with respect to the measure $e^{-\alpha\Phi}\frac{\omega^n}{n!}$, i.e. $f$ belongs to $\mathcal{H}_\alpha$ iff $\int_Me^{-\alpha\Phi}|f|^2\frac{\omega^n}{n!}<\infty$. Define the $\epsilon$-function associated to $g$ to be the function:
\begin{equation}\label{epsilon}
\epsilon_{\alpha g}(x)=e^{-\alpha\Phi(x)}K_{\alpha}(x,x), \quad x\in M,
\end{equation}
where $K_\alpha(x, x)$ is the reproducing kernel of $\mathcal{H}_\alpha$, i.e. $K_\alpha(x, x)=\sum_j f^\alpha_j(x)\overline{ f^\alpha_j(x)}$, for an orthonormal basis $\{f^\alpha_j\}$ of $\mathcal{H}_\alpha$. As suggested by the notation it is not difficult to verify that $\epsilon_{\alpha g}$ depends only on the metric $g$ and not on the choice of the K\"ahler potential $\Phi$ or on the orthonormal basis. In the literature the function $\epsilon_{\alpha g}$ was first introduced under the name of $\eta$-{\em function} by J. Rawnsley in \cite{rawnsley}, later renamed as $\theta$-{\em function} in \cite{cgr1}.

In  \cite{Ber1} F. A. Berezin was able to establish a quantization procedure on $(M, \omega)$ under the following conditions:
\begin{itemize}
\item[(A)] the function $e^{-D_g(x, y)}$  is globally defined on $M\times M$,  $e^{-D_g(x, y)}\leq1$ and $e^{-D_g(x, y)}=1$ if and only if $x=y$;
\item[(B)] for large enough $\alpha$, the function $\epsilon_{\alpha g}$ is a positive constant (depending only on $\alpha$).
\end{itemize}
These conditions are satisfied for example by bounded symmetric domains \cite{Ber1} and by all homogeneous bounded domains (see the recent paper \cite{loiberezin}).

\vspace{0.5cm}
\noindent{\bf Remark.}
Notice that Condition (B) can be expressed by saying that the K\"ahler metric $g$ is {\em balanced} for large enough $\alpha$. The definition of balanced metrics has been introduced by Donaldson \cite{donaldson} for algebraic manifolds and by C. Arezzo and A. Loi \cite{arezzoloi} in the noncompact case. Observe also that balanced metrics are strictly related to projectively induced metrics, i.e. those K\"ahler metrics $g$ on a complex manifold $M$, such that there exists a holomorphic and isometric immersion $F\!:M\rightarrow \mathds{C}{\rm P}^N$, $N\leq \infty$, $F^*(g_{FS})=g$, where  $g_{FS}$ is the Fubini-Study metric on $\mathds{C}{\rm P}^N$, i.e. the metric whose K\"ahler form $\omega_{FS}$ in homogeneous
coordinates $[Z_0,\dots, Z_{N}]$ reads as
$\omega_{FS}=\frac{i}{2}\partial\bar\partial\log \sum_{j=0}^{N}
|Z_j|^2$. In fact, if $\epsilon_{\alpha g}$ is constant then the map $F_\alpha\!: M\rightarrow \mathds{C}{\rm  P}^N$, $N\leq \infty$, $F_\alpha=[f^\alpha_0,\dots,f_N^\alpha]$, by:
\begin{equation}\label{balprojind}
\begin{split}
F_\alpha^*\omega_{FS}=&\frac{i}{2}\de\bar\de\log\sum_{j=0}^N|f^\alpha_j(z)|^2
=\frac{i}{2}\de\bar\de\log K_{\alpha} (z, \bar z)=\\
=&\frac{i}{2}\de\bar\de\log \epsilon_{\alpha g}+\frac{i}{2}\de\bar\de\log e^{\alpha\Phi}
=\frac{i}{2}\de\bar\de\log \epsilon_{\alpha g}+   \,\alpha\omega,
\end{split}\nonumber
\end{equation}
is an holomorphic and isometric immersion.
Observe finally that in the joint work with A. Loi \cite{balancedCH}, the author of the present paper proved that a Cartan--Hartogs domain is not balanced unless it is the complex hyperbolic space.\\

Berezin's seminal paper has inspired several interesting papers both from the mathematical and physical point of view (see \cite{cgr1},  \cite{cgr2},  
\cite{cgr3}, \cite{cgr4} for a quantum geometric interpretation of Berezin quantization  and its extension to the compact case). In \cite{englis} M. Engli\v{s} extended Berezin's method to complex domains satisfying condition (A) and such that their $\epsilon$-function is not necessarily  constant, but only satisfies the following weaker  asymptotic condition:

\begin{itemize}
\item[(${\rm B}'$)]  the $\epsilon$-function (\ref{epsilon}) admits a sesquianalytic extension on $M\times M$
\begin{equation}
\epsilon_{\alpha g}(x,\bar y):=e^{-\alpha\Phi(x,\bar y)}K_{\alpha}(x,\bar y)\nonumber
\end{equation}
and  there exists a infinite set $E$ of integers such that for all $\alpha\in E$, $x$, $y\in M$,
$$\epsilon_{\alpha g}(x,\bar y)=e^{-\alpha\Phi(x,\bar y)}K_{\alpha}(x,\bar y)=\alpha^n+B(x,\bar y)\alpha^{n-1}+C(\alpha,x,\bar y)\alpha^{n-2},$$
where  $B(x,\bar y)$ and $C(\alpha,x,\bar y)$ are sesquianalytic  functions in $x$ and $y$ which satisfy:
$$\mathrm{sup}_{x,y\in M} |B(x,\bar y)|<+\infty,\quad \mathrm{sup}_{x,y\in M, \alpha\in E}|C(\alpha,x,\bar y)|<+\infty.$$
\end{itemize}

We refer the reader to  \cite{englis} for various examples   of complex  domains in $\C^n$ satisfying  conditions (A) and (${\rm B}'$)  and so admitting  a Berezin quantization.\\

This paper deals with a $1$-parameter family of domains, called {\em Cartan--Hartogs domains}, defined as follows.
Let $\Omega\subset \C^d$ be a Cartan domain, i.e. an irreducible bounded symmetric domain, of complex dimension $d$ and genus $\gamma$. For all positive real numbers $\mu$ define a Cartan-Hartogs domain by:
\begin{equation}\label{defm}
M_{\Omega}(\mu)=\left\{(z,w)\in \Omega\times\C,\ |w|^2<N_\Omega(z,\bar z)^\mu\right\},
\end{equation}
where $N_\Omega(z,\bar z)$ is the  {\em generic norm} of $\Omega$, i.e.
\begin{equation}\label{genericnorm}
N_\Omega(z, \bar z)=(V(\Omega)K(z, \bar z))^{-\frac{1}{\gamma}},
\end{equation}
where $V(\Omega)$ is the total volume of $\Omega$ with respect to the Euclidean measure of the ambient complex Euclidean space and $K(z, z)$ is its Bergman kernel.
Consider on $M_{\Omega}(\mu)$ the metric $g(\mu)$  whose associated K\"ahler form $\omega(\mu)$ can be described by the (globally defined)
K\"ahler potential centered at the origin
\begin{equation}\label{diastM}
\Phi(z,w)=-\log(N_{\Omega}(z,\bar z)^\mu-|w|^2).
\end{equation}
The domain $\Omega$ is called  the {\em  base} of the Cartan--Hartogs domain 
$M_{\Omega}(\mu)$ (one also  says that 
$M_{\Omega}(\mu)$  is based on $\Omega$).

These domains have been considered by several authors (see e.g. \cite{roos} and references therein). In \cite{roos} the authors show that 
for $\mu_0=\gamma/(d+1)$, $(M_{\Omega}(\mu_0),g(\mu_0))$ is a complete K\"ahler-Einstein manifold which is homogeneous if and only if $\Omega$ is the complex hyperbolic space.
In \cite{articwall} the author of the present paper jointly with A. Loi proved that for $\Omega\neq\CH^d$, the metric $\alpha g(\mu)$ on $M_\Omega(\mu)$ is projectively induced for all positive real number $\alpha\geq \frac{(r-1)a}{2\mu}$, where $r$ is the rank of $\Omega$ and $a$ is one of its invariants, exhibing the first example complete, noncompact, nonhomogeneous and projectively induced K\"ahler-Einstein metric. 
 In \cite{zedda} the author of the present paper proved that $g(\mu)$ is extremal (in the sense of Calabi \cite{Calabi82}) if and only if it is K\"ahler--Einstein, and that if the coefficient $a_2$ of Engli\v{s} expansion (cfr. \cite{englisasymp}) of the $\epsilon$-function associated to $g(\mu)$ is constant, then it is K\"ahler--Einstein, conjecturing also that: {\em the coefficient $a_2$ of Engli\v{s} expansion of the $\epsilon$-function associated to  $g(\mu)$ is constant iff $(M_\Omega(\mu), g(\mu))$ is biholomorphically isometric to the complex hyperbolic space}. This conjecture has been recently proved by Z. Feng and Z. Tu in \cite{fengtu}, where they also obtain an explicit formula for the Bergman kernel of the weighted Hilbert space $\mathcal{H_\alpha}$ and for the $\epsilon$-function associated to $(M_\Omega(\mu), g(\mu))$. \\

The aim of this paper is to prove the following result:
\begin{theor}\label{main}
Let $\Omega$ be a Cartan domain of (complex) dimension $d$ and let $\mu\in W(\Omega)$ and $\alpha>d+1$. Then the Cartan-Hartogs domain $(M_\Omega(\mu),\alpha g(\mu))$ admits a Berezin quantization.
\end{theor}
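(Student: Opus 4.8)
The strategy is to verify that $(M_\Omega(\mu),\alpha g(\mu))$ satisfies Berezin's condition (A) together with Engli\v{s}' condition (B$'$); by the discussion recalled in the Introduction, this is exactly what is needed to produce a Berezin--Engli\v{s} quantization. Since the diastasis depends linearly on the potential, one has $D_{\alpha g(\mu)}=\alpha D_{g(\mu)}$, so $e^{-D_{\alpha g(\mu)}}=(e^{-D_{g(\mu)}})^{\alpha}$ and, for $\alpha>0$, condition (A) for $\alpha g(\mu)$ is equivalent to condition (A) for $g(\mu)$. Thus the scaling parameter $\alpha$ will play a role only in the verification of (B$'$), and I would treat condition (A) once and for all for $g(\mu)$.

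For condition (A) I would start from the globally defined potential (\ref{diastM}) and its sesquianalytic extension $\Phi\bigl((z,w),(\bar z',\bar w')\bigr)=-\log\bigl(N_\Omega(z,\bar z')^\mu-w\bar w'\bigr)$. Writing $x=(z,w)$, $y=(z',w')$, and setting $a=N_\Omega(z,\bar z)^\mu$, $b=N_\Omega(z',\bar z')^\mu$, $P=N_\Omega(z,\bar z')^\mu$, formula (\ref{diastasis}) yields
\[
e^{-D_{g(\mu)}(x,y)}=\frac{(a-|w|^2)(b-|w'|^2)}{|P-w\bar w'|^2}.
\]
The condition $\mu\in W(\Omega)$ guarantees that $N_\Omega(z,\bar z')^\mu$ is a single-valued sesquianalytic function on $M_\Omega(\mu)\times M_\Omega(\mu)$, so that the expression is globally defined; the crux is then the inequality $|P-w\bar w'|^2\ge(a-|w|^2)(b-|w'|^2)$, with equality only on the diagonal. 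I would derive it from the corresponding property of the base Cartan domain, namely $|N_\Omega(z,\bar z')|^2\ge N_\Omega(z,\bar z)\,N_\Omega(z',\bar z')$ with equality iff $z=z'$ (this is precisely condition (A) for the Bergman metric of the bounded symmetric domain $\Omega$), which upon taking the $\mu$-th power gives $|P|^2\ge ab$. Substituting $w=\sqrt a\,s$, $w'=\sqrt b\,s'$ with $|s|,|s'|<1$ reduces the claim to the elementary estimate $\rho^2-1+|s|^2+|s'|^2-2\rho|s||s'|\cos\theta\ge0$ for $\rho=|P|/\sqrt{ab}\ge1$, whose minimum over $\rho\ge1$ equals $(|s|-|s'|)^2\ge0$; tracking the equality cases forces first $z=z'$ and then $w=w'$, giving $x=y$.

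For condition (B$'$) I would invoke the explicit formula of Feng and Tu \cite{fengtu} for the reproducing kernel $K_\alpha(x,\bar y)$ of $\mathcal H_\alpha$ and for the associated $\epsilon$-function (\ref{epsilon}), valid for $\alpha>d+1$, which is the integrability threshold ensuring $\mathcal H_\alpha\neq\{0\}$ on the $(d{+}1)$-dimensional domain $M_\Omega(\mu)$. Extending it as $\epsilon_{\alpha g}(x,\bar y)=e^{-\alpha\Phi(x,\bar y)}K_\alpha(x,\bar y)$, their formula presents $\epsilon_{\alpha g}$ as an explicit combination of Gamma factors in the single invariant attached to $(x,\bar y)$. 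Applying the standard expansion $\Gamma(\alpha+s)/\Gamma(\alpha+t)\sim\alpha^{s-t}\bigl(1+O(\alpha^{-1})\bigr)$ to each factor and collecting powers of $\alpha$, the leading term is $\alpha^{n}=\alpha^{d+1}$; the $\alpha^{n-1}$-coefficient defines the ($\alpha$-independent) sesquianalytic function $B(x,\bar y)$, and the rescaled tail $C(\alpha,x,\bar y)=\alpha^{2-n}\bigl(\epsilon_{\alpha g}(x,\bar y)-\alpha^n-B(x,\bar y)\alpha^{n-1}\bigr)$ gives the remaining term required in (B$'$).

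The main obstacle is the quantitative part of (B$'$): proving $\sup_{x,y}|B(x,\bar y)|<\infty$ and $\sup_{x,y,\alpha}|C(\alpha,x,\bar y)|<\infty$. Since $M_\Omega(\mu)$ is noncompact and the natural invariant degenerates towards the boundary, these are genuine uniform estimates rather than consequences of compactness. I expect to control them by rewriting the Feng--Tu expression in terms of the bounded invariant $u=w\bar w'/P$, for which $|u|<1$ by the inequality already established in the proof of (A), and then showing that, after dividing by $\alpha^{d+1}$, the Gamma-ratio remainders are bounded uniformly in $u$ and, for $C$, uniformly also in $\alpha$ over the chosen infinite set. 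Securing these bounds — maintaining simultaneous control in the point variables and in the parameter — is the delicate step; once it is in place, conditions (A) and (B$'$) both hold and Engli\v{s}' theorem yields the desired Berezin quantization, completing the proof.
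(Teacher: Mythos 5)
Your proposal is correct in outline, but it reaches condition (A) by a genuinely different and more elementary route than the paper. The paper does not compute the diastasis directly: it first shows (Lemma \ref{condA}) that condition (A) follows for any complete noncompact K\"ahler manifold whose metric is projectively induced by an injective map into $\ell^2(\C)\subset\CP^\infty$, via Calabi's Theorem \ref{Calabi} and Cauchy--Schwarz applied to $D_{FS}$; it then invokes the projective-induction result of \cite{articwall} (valid for $\alpha\geq\frac{(r-1)a}{2\mu}$, hence for all $\alpha>d+1$ when $\mu\in W(\Omega)$) together with the explicit embedding of \cite{balancedCH} to get injectivity and the $\ell^2$ condition (Lemma \ref{injective}). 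Your direct verification --- reducing $e^{-D_{g(\mu)}}\le 1$ to $|N_\Omega(z,\bar z')|^2\geq N_\Omega(z,\bar z)N_\Omega(z',\bar z')$ (Cauchy--Schwarz for the reproducing kernel $K_{\mu/\gamma}$, which is where $\mu\in W(\Omega)$ enters) plus the elementary estimate in $\rho,|s|,|s'|$ --- is self-contained and avoids Calabi's machinery entirely; the observation $D_{\alpha g}=\alpha D_g$ correctly disposes of the parameter $\alpha$ in this part. For condition (${\rm B}'$) your plan coincides with the paper's: both rest on the Feng--Tu formula, on nontriviality of $\mathcal H_\alpha$ for $\alpha>d+1$ (Lemma \ref{existH}), and on boundedness of the invariant $1-w\bar w'N_\Omega(z,\bar z')^{-\mu}$ (your bound $|u|<1$ from the inequality in part (A) is in fact cleaner than the limiting argument of Lemma \ref{limite}). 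Two details you leave unexecuted: the Feng--Tu expression is an exact polynomial of degree $d+1$ in $\alpha$, not merely an asymptotic series, and condition (${\rm B}'$) requires the leading coefficient to be exactly $1$; this is the identity $\frac{D^d\tilde\chi(d)}{d!}=\mu^d$ (and similarly $\frac{D^{d-1}\tilde\chi(d)}{(d-1)!}=\mu^{d-1}\frac{d(\mu(d+1)-\gamma)}{2}$ to identify $B$), which the paper imports from \cite[Lemmas 3.3--3.5]{fengtu} and which must be checked rather than asserted. With those computations supplied, your argument closes.
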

Here $W(\Omega)$ is the {\em Wallach set} associated to $\Omega$, which consists of all $\eta\in\C$ such that there exists a Hilbert space ${\mathcal{H}}_\frac\eta\gamma$ whose reproducing kernel is   $K_{\frac{\eta}{\gamma}}$ (we refer the reader to \cite{arazy}, \cite{faraut} and \cite{upmeier} for more details and results). 
It turns out (see Corollary $4.4$ p. $ 27$ in  \cite{arazy} and references therein)
that $W(\Omega)$ consists only of real numbers and depends   on two of the domain's invariants, denoted  by  $a$ (strictly positive real number) and  $r$
(the rank of $\Omega$).
More precisely we have
\begin{equation}\label{wallachset}
W(\Omega)=\left\{0,\,\frac{a}{2},\,2\frac{a}{2},\,\dots,\,(r-1)\frac{a}{2}\right\}\cup \left((r-1)\frac{a}{2},\,\infty\right).
\end{equation}

\vspace{1cm}

The next section is dedicated to the proof of Theorem \ref{main}. The proof is based on the result in  \cite{articwall} mentioned above and on the explicit expression of the $\epsilon$-function associated to $(M_{\Omega}(\mu),\alpha g(\mu))$ given by Z. Feng and Z. Tu in \cite{fengtu}. \\

The author would like to thank Andrea Loi for the useful comments and discussions.

\section{Proof of the main results}

In his seminal paper \cite{Cal} Calabi gives necessary and sufficient conditions for a $n$-dimensional K\"ahler manifold $(M,g)$ to admit a holomorphic and isometric immersion into a complex space form, in terms of the diastasis function \eqref{diastasis}. In particular, we recall here the following result, needed in the proof of Lemma \ref{condA} below.  
\begin{theor}[E. Calabi]\label{Calabi}
Set homogeneous coordinates $[Z_0:\cdots:Z_j:\cdots]$ in $\CP^{\infty}$, let $U_0= \{Z_0\neq 0\}$ and let  $f:(M,g)\rightarrow \mathds{C}{\rm P}^{\infty}$ be an holomorphic and isometric immersion, i.e. $f^*g_{FS}=g$. Then the metric $g$ is real analytic and we have:
$$D_g=D_{FS}\circ f :M\setminus f^{-1}(H_0)\times M\setminus f^{-1}(H_0)\rightarrow \mathds{R},$$
where $H_0=\mathds{C}{\rm P}^{\infty}\setminus U_0$.
\end{theor}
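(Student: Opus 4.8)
The plan is to exploit the fact, recalled in the Introduction, that the diastasis \eqref{diastasis} depends only on the metric and not on the chosen K\"ahler potential, and to use as potential the pullback by $f$ of the standard Fubini--Study potential. First I would fix affine coordinates on the chart $U_0$, writing $z_j = Z_j/Z_0$ for $j \geq 1$, so that $\omega_{FS}$ admits on $U_0$ the global real analytic potential $\Phi_{FS}(z) = \log(1 + \sum_{j\geq 1}|z_j|^2)$, whose natural sesquianalytic extension is $\Phi_{FS}(z, \bar u) = \log(1 + \sum_{j\geq 1} z_j \bar u_j)$ (the series converging by Cauchy--Schwarz, since points of $\CP^{\infty}$ are lines in $\ell^2$). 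Writing $f=[f_0:f_1:\cdots]$ and $f_j = (Z_j \circ f)/(Z_0 \circ f)$, on the open set $M \setminus f^{-1}(H_0)$ the composition $\Phi := \Phi_{FS} \circ f = \log(1 + \sum_{j \geq 1} |f_j|^2)$ is well defined; since $f$ is holomorphic and $\Phi_{FS}$ is real analytic, $\Phi$ is real analytic, and because $f^*\omega_{FS}=\omega$ it is a local K\"ahler potential for $g$. As the analogous statement holds on the preimage of every standard chart $U_k$ and these cover $\CP^{\infty}$, a covering argument gives that $g$ is real analytic on all of $M$.

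Second, I would compute the diastasis through this particular potential. Its sesquianalytic extension is $\Phi(x,\bar y) = \Phi_{FS}(f(x),\overline{f(y)}) = \log(1 + \sum_{j\geq 1} f_j(x)\overline{f_j(y)})$: this expression is holomorphic in $x$, antiholomorphic in $y$, and restricts to $\Phi$ on the diagonal, hence by uniqueness of the real analytic extension it is the required one. Substituting into \eqref{diastasis} and regrouping the four terms, each of the form $\Phi_{FS}(f(\cdot),\overline{f(\cdot)})$, yields
$$D_g(x,y) = \Phi_{FS}(f(x),\overline{f(x)}) + \Phi_{FS}(f(y),\overline{f(y)}) - \Phi_{FS}(f(x),\overline{f(y)}) - \Phi_{FS}(f(y),\overline{f(x)}) = D_{FS}(f(x),f(y)),$$
valid for $x,y \in M \setminus f^{-1}(H_0)$, where $f(x),f(y)\in U_0$ and the coordinates $f_j$ are available. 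Since $D_g$ does not depend on the potential used to compute it, this is the genuine diastasis of $g$, establishing $D_g = D_{FS}\circ f$.

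The only delicate points, rather than genuine obstacles, are the following: checking that the infinite series defining $\Phi_{FS}$ and its extension converge and define real analytic, respectively sesquianalytic, functions in the $N=\infty$ case, which is handled by Cauchy--Schwarz on $\ell^2$; verifying that $\Phi_{FS}(f(x),\overline{f(y)})$ really is the sesquianalytic extension of $\Phi$, which is precisely where the holomorphicity of $f$ enters; and ensuring the computation takes place on the locus $M \setminus f^{-1}(H_0)$ where the affine chart is defined. The crux of the argument is conceptual --- the naturality of the diastasis under pullback by a holomorphic isometric immersion --- and once the potential is chosen correctly the identity $D_g = D_{FS}\circ f$ falls out of the definition \eqref{diastasis} by term-by-term composition.
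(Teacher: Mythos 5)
Your proposal is correct, but note that the paper offers no proof of this statement to compare against: it is recalled as a theorem of Calabi and cited to \cite{Cal}. Your argument --- pull back the affine Fubini--Study potential $\log(1+\sum_{j\ge 1}|z_j|^2)$ by $f$, observe that its sesquianalytic extension is the pullback of $\log(1+\sum_{j\ge 1}z_j\bar u_j)$ because $f$ is holomorphic, and invoke the independence of the diastasis from the choice of potential --- is precisely Calabi's original proof of the hereditary property of the diastasis, and the delicate points you flag (Cauchy--Schwarz for convergence in the $N=\infty$ case, uniqueness of the sesquianalytic extension, restriction to $M\setminus f^{-1}(H_0)$) are exactly the right ones. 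As a small bonus, your final displayed identity recovers the correct form of \eqref{diastfs}, whose denominator should read $\left|\sum_{j=0}^\infty Z_j\bar Z'_j\right|^2$.
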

Observe that if $p$, $p'\in\CP^\infty$ are points with homogeneous coordinates $[Z_0:\cdots :Z_j:\cdots]$ and $[Z'_0:\cdots :Z'_j:\cdots]$ respectively, then we have (cfr. \cite[Eq. (29)]{Cal}):
\begin{equation}\label{diastfs}
D_{FS}(p,p')=\log\frac{\sum_{j=0}^\infty |Z_j|^2\sum_{j=0}^\infty |Z'_j|^2}{\sum_{j=0}^\infty Z_j\bar Z'_j}.
\end{equation}
In order to prove Theorem \ref{main} we need the following four lemma:
\begin{lem}\label{condA}
Let $(M, g, \omega)$ be a noncompact complete K\"ahler manifold. Assume the metric $g$ is projectively induced through an injective map $f$ such that $f(M)\subset\ell^2(\mathds{C})$.
Then $(M, \omega)$ satisfies Condition ${\rm( A)}$ given above. 
\end{lem}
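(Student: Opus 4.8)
The plan is to use Calabi's Theorem~\ref{Calabi} to rewrite $e^{-D_g}$ explicitly as a Cauchy--Schwarz quotient in $\ell^2(\C)$, and then to read off Condition~(A) directly from the Cauchy--Schwarz inequality. Writing the immersion in homogeneous coordinates as $f=[f_0:f_1:\cdots]$ with holomorphic components satisfying $\sum_j|f_j(x)|^2<\infty$ for every $x\in M$ (this is the hypothesis $f(M)\subset\ell^2(\C)$), the function $\Phi(x)=\log\sum_j|f_j(x)|^2$ is a globally defined K\"ahler potential for $g=f^*g_{FS}$, with sesquianalytic extension $\Phi(x,\bar y)=\log\sum_j f_j(x)\overline{f_j(y)}$. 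Substituting into the definition \eqref{diastasis}, or equivalently composing the Fubini--Study formula \eqref{diastfs} with $f$ as granted by Theorem~\ref{Calabi}, I would obtain
$$
e^{-D_g(x,y)}=\frac{\left|\sum_j f_j(x)\overline{f_j(y)}\right|^2}{\sum_j|f_j(x)|^2\,\sum_j|f_j(y)|^2}=\frac{|\langle f(x),f(y)\rangle|^2}{\|f(x)\|^2\,\|f(y)\|^2},
$$
where $\langle\cdot,\cdot\rangle$ and $\|\cdot\|$ denote the Hermitian product and norm of $\ell^2(\C)$.

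Once this identity is in place, the three requirements of Condition~(A) follow quickly. For global definition, the series $\langle f(x),f(y)\rangle=\sum_j f_j(x)\overline{f_j(y)}$ converges absolutely for every pair $(x,y)$, again by Cauchy--Schwarz together with $f(M)\subset\ell^2(\C)$, and the denominator never vanishes because $f(x),f(y)$ are nonzero elements of $\ell^2(\C)$; hence the quotient, and therefore $e^{-D_g}$, is well defined on all of $M\times M$, taking the value $0$ exactly on the zero set of $\langle f(x),f(y)\rangle$ (where the diastasis itself is $+\infty$). The bound $e^{-D_g}\le 1$ is precisely the Cauchy--Schwarz inequality $|\langle f(x),f(y)\rangle|^2\le\|f(x)\|^2\|f(y)\|^2$. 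Finally, the equality $e^{-D_g(x,y)}=1$ holds iff Cauchy--Schwarz is saturated, i.e.\ iff $f(x)$ and $f(y)$ are linearly dependent in $\ell^2(\C)$, which means exactly that they represent the same point of $\CP^\infty$; injectivity of $f$ then forces $x=y$.

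The main technical point I expect to have to argue carefully is the passage from the a priori only local diastasis to the global quotient above. By definition $D_g$ arises from a sesquianalytic extension valid only near the diagonal, and Theorem~\ref{Calabi} identifies it with $D_{FS}\circ f$ only on $(M\setminus f^{-1}(H_0))\times(M\setminus f^{-1}(H_0))$. The resolution is that the right-hand side of the displayed identity is manifestly invariant under the rescaling $f\mapsto\lambda f$ and under any change of orthonormal frame of $\ell^2(\C)$; it therefore does not depend on the auxiliary hyperplane $H_0$ and furnishes the real-analytic continuation of $e^{-D_g}$ across $f^{-1}(H_0)$. Because $f$ is a single globally defined map on $M$, so that $\Phi=\log\|f\|^2$ is a genuine global potential, this continuation is defined on the whole of $M\times M$. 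I would also take care, in the equality case, to deduce equality of \emph{points} in $\CP^\infty$, and hence $x=y$ by injectivity of $f$, from the mere proportionality of the lifted vectors $f(x),f(y)$ supplied by the Cauchy--Schwarz equality condition.
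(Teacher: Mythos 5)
Your proof is correct and follows essentially the same route as the paper: identify $D_g$ with $D_{FS}\circ f$ via Theorem~\ref{Calabi}, observe that $f(M)\subset\ell^2(\C)$ removes the hyperplane obstruction so the identity holds on all of $M\times M$, deduce $e^{-D_g}\le 1$ from Cauchy--Schwarz, and settle the equality case by injectivity of $f$. You simply spell out more explicitly (and correctly) the Cauchy--Schwarz quotient and the global-definition issue that the paper's proof treats in one line.
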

\begin{proof}
Let $f\!\colon M\to\CP^\infty$ be a holomorphic immersion such that $f^*\omega_{FS}=\omega$ and $f(M)\subset\ell^2(\mathds{C})\subset\CP^\infty$.
Then by Theorem \ref{Calabi} above, if $D_g$ is the diastasis function of $(M,g)$ and $D_{FS}$ is the one associated to the Fubini--Study metric on $\CP^\infty$ we have
$$D_g(x,y)=D_{FS}(f(x),f(y)), \quad \forall\, x,y\in M.$$
Further, since $f(M)\subset \ell^2(\mathds{C})$, we can assume $f^{-1}(H_0)=\emptyset$, i.e. $D_g(x,y)$ is defined on the whole $M\times M$. Further, by the expression of $D_{FS}$ \eqref{diastfs},
it follows by Cauchy-Schwartz's inequality that $e^{-D_{FS}(f(x),f(y))}\leq 1$. Finally, from $D_{FS}(p,p')=0$ iff $p=p'$ and the injectivity of $f$, follows $e^{-D_g(x,y)}=1$ iff $x=y$. Thus condition (A) above is fulfilled.
\end{proof}
\begin{lem}\label{injective}
The holomorphic and isometric immersion $f\!:M_\Omega(\mu)\f \CP^\infty$, $f^*g_{FS}=g(\mu)$, when exists, is injective and such that $f(M_\Omega(\mu))\subset\ell^2(\mathds{C})$.
\end{lem}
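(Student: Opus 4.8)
The plan is to prove the two assertions—injectivity and $f(M_\Omega(\mu))\subset\ell^2(\mathds{C})$—directly from the explicit K\"ahler potential \eqref{diastM} and the sesquianalytic extension of the generic norm \eqref{genericnorm}, without invoking Calabi's theorem. First I would fix the normalization of $f$: writing $f=[f_0:f_1:\cdots]$ with the $f_j$ holomorphic, the hypothesis $f^*\omega_{FS}=\omega(\mu)$ gives $\partial\bar\partial\log\sum_j|f_j|^2=\partial\bar\partial\Phi$, and since $\Phi$ is a globally defined potential centered at the origin the homogeneous coordinates can be normalized so that
$$\sum_j|f_j(z,w)|^2=e^{\Phi(z,w)}=\frac{1}{N_\Omega(z,\bar z)^\mu-|w|^2}.$$
On $M_\Omega(\mu)$ one has $|w|^2<N_\Omega(z,\bar z)^\mu$, so the right-hand side is finite and the coordinate vector $(f_j(z,w))_j$ lies in $\ell^2(\mathds{C})$; this settles the inclusion $f(M_\Omega(\mu))\subset\ell^2(\mathds{C})$.

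For injectivity I would polarize the identity above. By uniqueness of the sesquianalytic extension,
$$\sum_j f_j(x,v)\,\overline{f_j(z,w)}=\frac{1}{N_\Omega(x,\bar z)^\mu-v\bar w},$$
the series converging absolutely on $M_\Omega(\mu)\times M_\Omega(\mu)$ by the $\ell^2$ bound just obtained. Now assume $f(z_1,w_1)=f(z_2,w_2)$, so that $(f_j(z_1,w_1))_j=\lambda\,(f_j(z_2,w_2))_j$ for some $\lambda\neq0$. Substituting into the polarized identity and clearing denominators yields the identity of holomorphic functions
$$N_\Omega(x,\bar z_2)^\mu-v\bar w_2=\bar\lambda\bigl(N_\Omega(x,\bar z_1)^\mu-v\bar w_1\bigr),\qquad (x,v)\in M_\Omega(\mu).$$
Comparing the coefficient of $v$ gives $\bar w_2=\bar\lambda\,\bar w_1$, while the $v=0$ part gives $N_\Omega(x,\bar z_2)^\mu=\bar\lambda\,N_\Omega(x,\bar z_1)^\mu$ for all $x$. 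Evaluating the latter at $x=0$ and using the normalization $N_\Omega(0,\bar z)=1$ forces $\lambda=1$; hence $w_1=w_2$ and $N_\Omega(x,\bar z_1)=N_\Omega(x,\bar z_2)$ for all $x\in\Omega$.

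To finish I would convert the last identity back to the Bergman kernel: since $N_\Omega=(V(\Omega)K)^{-1/\gamma}$, the equality $N_\Omega(\cdot,\bar z_1)\equiv N_\Omega(\cdot,\bar z_2)$ is equivalent to $K(\cdot,\bar z_1)\equiv K(\cdot,\bar z_2)$, and because the Bergman kernel of the bounded domain $\Omega$ separates points (the coherent-state map $z\mapsto K(\cdot,\bar z)$ is injective) we get $z_1=z_2$. Thus $(z_1,w_1)=(z_2,w_2)$ and $f$ is injective. The step I expect to demand the most care is the handling of $N_\Omega(x,\bar z)^\mu$ for non-integer $\mu$: extracting the coefficients in $(x,v)$ and passing from equality of $\mu$-th powers to equality of the norms themselves. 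Both are legitimate because near the origin $N_\Omega(x,\bar z)$ is close to $1$, so the principal branch of $N_\Omega^\mu$ is holomorphic there and the identity theorem then propagates the conclusion to all of $\Omega$; I would make this branch choice explicit to keep the argument rigorous.
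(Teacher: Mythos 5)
Your injectivity argument is correct and takes a genuinely different route from the paper. The paper invokes the explicit expansion of $f$ from \cite[Lemma 8]{balancedCH} --- whose components are built from the maps $h_k$ associated to multiples of the Bergman metric on the base $\Omega$ and from the powers $w^m$ --- and reduces injectivity to that of the component $h_{\frac{\mu\alpha}{\gamma}}$ (quoted from \cite[Lemma 2.1]{loi06}) together with the observation that the component $s_1=\sqrt{\alpha}\,w$ recovers $w$. You instead normalize the lift so that $\sum_j f_j(x,v)\overline{f_j(z,w)}$ polarizes to $\bigl(N_\Omega(x,\bar z)^\mu-v\bar w\bigr)^{-1}$, and extract $\lambda=1$, $w_1=w_2$ and $N_\Omega(\cdot,\bar z_1)\equiv N_\Omega(\cdot,\bar z_2)$ by comparing coefficients in $v$ and evaluating at the origin; the final step (the Bergman kernel separates points of a bounded domain) is essentially a reproof of the quoted lemma of \cite{loi06}. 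This is self-contained and handles the branch of $N_\Omega^\mu$ with appropriate care; the price is that you must justify the global existence of the normalized lift (fine, since $M_\Omega(\mu)$ is contractible) and the absolute convergence of the polarized series (fine, by Cauchy--Schwarz against the diagonal bound).

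The gap is in the second assertion. In this paper ``$f(M_\Omega(\mu))\subset\ell^2(\C)$'' does not mean that the homogeneous coordinate vector is square-summable --- that is automatic for any point of $\CP^\infty$ viewed as the projectivization of $\ell^2(\C)$, and it is all your finiteness argument establishes. As Lemma \ref{condA} and Theorem \ref{Calabi} make explicit, the condition actually needed is that $f(M_\Omega(\mu))$ lies in the affine chart $U_0$, i.e. $f^{-1}(H_0)=\emptyset$, so that Calabi's identity $D_g=D_{FS}\circ f$ holds on all of $M_\Omega(\mu)\times M_\Omega(\mu)$; the paper obtains this from $f_0\equiv 1$ in the explicit expansion. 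Your own polarized identity closes the gap immediately: evaluating it at $(z,w)=(0,0)$ gives $\sum_j f_j(x,v)\overline{f_j(0,0)}=N_\Omega(x,0)^{-\mu}=1\neq 0$ for every $(x,v)$, so after a unitary change of homogeneous coordinates sending the lift of $f(0,0)$ to a multiple of $(1,0,0,\dots)$ the zeroth coordinate of the lift is a nonzero constant, whence $f$ misses $H_0$. You should add this step (or, equivalently, observe directly that $\sum_j f_j(x,v)\overline{f_j(z,w)}$ never vanishes on $M_\Omega(\mu)\times M_\Omega(\mu)$, which is exactly what finiteness of $D_{FS}$ on $f(M_\Omega(\mu))\times f(M_\Omega(\mu))$ requires).
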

\begin{proof}
Let $f\!:M_\Omega(\mu)\f \CP^\infty$ be a holomorphic map such that $f^*\omega_{FS}=\omega(\mu)$. According to \cite[Lemma 8]{balancedCH} up to unitary transformation of $\CP^\infty$ we have:
$$
f(z,w)=\left[ 1, s, h_{\frac{\mu\, \alpha}{\gamma}},\dots,\sqrt{\frac{(m+ \alpha-1)!}{(\alpha-1)!m!}}h_{\frac{\mu(\alpha +m)}{\gamma}}w^m,\dots\right],
$$
where $s=(s_1,\dots, s_m,\dots)$ with:
$$s_m=\sqrt{\frac{(m+ \alpha-1)!}{(\alpha-1)!m!}}w^m,$$
and $h_k=(h_k^1,\dots,h_k^j,\dots)$ denotes the sequence of holomorphic maps on $\Omega$ such that the immersion $\tilde h_k=(1,h_k^1,\dots, h_k^j,\dots)$, $\tilde h_k\!:\Omega\f\CP^\infty$, satisfies $\tilde h_k^*\omega_{FS}=k\, \omega_B$ (where $\omega_B$ is the Bergman metric on $\Omega$), i.e. 
$$
1+\sum_{j=1}^{\infty}|h_k^j|^2=\frac{1}{N^{\gamma\, k}}.
$$
The injectivity of $f$ follows from that of $ h_{\frac{\mu\, \alpha}{\gamma}}$ (see \cite[Lemma 2.1]{loi06}) and noticing that $s_1=\sqrt\alpha \,w$. Further, $f_0=1$ implies $f^{-1}(H_0)=\emptyset$ in the notation of Theorem \ref{Calabi} above, and thus $f(M_\Omega(\mu))\subset\ell^2(\mathds{C})$, as wished.
\end{proof}
\begin{lem}\label{limite}
Let $(M_\Omega(\mu),g(\mu))$ be a Cartan--Hartogs domain. Then the following holds true:
$$  \mathrm{sup}_{z,z',w,w'\in M_\Omega(\mu)} \left|1-w\bar w'N_\Omega(z, \bar z')^{-\mu}\right|<+\infty.$$ 
\end{lem}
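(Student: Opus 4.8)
The plan is to bound the expression pointwise by the triangle inequality and then control the remaining term using the defining inequality of the Cartan--Hartogs domain together with the positivity of Calabi's diastasis for the base $\Omega$. First I would write, for arbitrary $(z,w),(z',w')\in M_\Omega(\mu)$,
$$\left|1-w\bar w'N_\Omega(z,\bar z')^{-\mu}\right|\le 1+|w|\,|w'|\,\bigl|N_\Omega(z,\bar z')\bigr|^{-\mu},$$
so it suffices to show the second summand stays bounded uniformly. By the very definition \eqref{defm} of $M_\Omega(\mu)$ we have $|w|^2<N_\Omega(z,\bar z)^\mu$ and $|w'|^2<N_\Omega(z',\bar z')^\mu$, whence
$$|w|\,|w'|\,\bigl|N_\Omega(z,\bar z')\bigr|^{-\mu}<\left(\frac{N_\Omega(z,\bar z)\,N_\Omega(z',\bar z')}{\bigl|N_\Omega(z,\bar z')\bigr|^{2}}\right)^{\mu/2},$$
using that $N_\Omega(z,\bar z)$ is a positive real on the diagonal.

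The key step is to recognize the ratio on the right as the exponential of a diastasis. Using \eqref{genericnorm} together with $K(z',\bar z)=\overline{K(z,\bar z')}$ (so that $\bigl|N_\Omega(z,\bar z')\bigr|^{2}=N_\Omega(z,\bar z')\,N_\Omega(z',\bar z)$), the factor $V(\Omega)$ cancels and a direct substitution gives
$$\frac{N_\Omega(z,\bar z)\,N_\Omega(z',\bar z')}{\bigl|N_\Omega(z,\bar z')\bigr|^{2}}=\left(\frac{\bigl|K(z,\bar z')\bigr|^{2}}{K(z,\bar z)\,K(z',\bar z')}\right)^{1/\gamma}=e^{-D_B(z,z')/\gamma},$$
where $D_B$ denotes Calabi's diastasis of $\Omega$ equipped with its Bergman metric, whose global K\"ahler potential is $\log K(z,\bar z)$; the last equality is just the definition \eqref{diastasis} written out for this potential.

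At this point the conclusion follows from the fact that $\Omega$, being a bounded symmetric domain, satisfies Condition (A); in particular its diastasis is nonnegative, so that $e^{-D_B(z,z')/\gamma}\le 1$ for all $z,z'\in\Omega$. Combining the three displays yields
$$\left|1-w\bar w'N_\Omega(z,\bar z')^{-\mu}\right|<1+e^{-\mu D_B(z,z')/(2\gamma)}\le 2,$$
and taking the supremum over $M_\Omega(\mu)$ proves the claim. I do not expect a genuine obstacle here: the only point requiring care is the bookkeeping in the key step, namely verifying that the generic-norm ratio reproduces exactly the diastasis of the Bergman metric on the base and that the volume factor cancels. Once the positivity of this diastasis is invoked, the resulting bound is uniform (in fact the supremum equals $2$), independently of $\mu$ and of the chosen points.
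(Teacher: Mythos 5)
Your proof is correct, and it reaches the bound by essentially the same mechanism as the paper --- Cauchy--Schwarz for a reproducing kernel combined with the defining inequality $|w|^2<N_\Omega(z,\bar z)^\mu$ --- but the packaging is genuinely different and arguably cleaner. The paper first disposes of the case of $z,z'$ fixed in the interior, then treats $z\to\partial\Omega$ separately, and there invokes the hypothesis $\mu\in W(\Omega)$ so that $K_{\mu/\gamma}=N_\Omega^{-\mu}$ (up to a constant) is itself a reproducing kernel, to which Cauchy--Schwarz is applied; the displayed inequalities in that argument contain some typographical slips but the idea is exactly the estimate $|w\bar w'N_\Omega(z,\bar z')^{-\mu}|\le |w||w'|N_\Omega(z,\bar z)^{-\mu/2}N_\Omega(z',\bar z')^{-\mu/2}<1$. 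You instead apply Cauchy--Schwarz to the Bergman kernel $K$ of $\Omega$ (equivalently, you use nonnegativity of the diastasis $D_B$ of the Bergman metric, i.e.\ Condition (A) for the base) and then raise the resulting scalar inequality to the power $\mu/2$. This buys you two things: a single uniform pointwise bound (the supremum is at most $2$, with no boundary-limit discussion needed), and independence from the Wallach-set hypothesis, since positive definiteness of $K$ holds for every bounded domain while positive definiteness of $N_\Omega^{-\mu}$ is exactly what $\mu\in W(\Omega)$ encodes. The only points you should make explicit are that $N_\Omega(z,\bar z')\neq 0$ for $z,z'\in\Omega$ (so the off-diagonal power $N_\Omega(z,\bar z')^{-\mu}$ is well defined for non-integer $\mu$), and that $|N_\Omega(z,\bar z')^{-\mu}|=|N_\Omega(z,\bar z')|^{-\mu}$ for the chosen branch; both are standard for Cartan domains and are implicitly assumed in the statement of the lemma itself, so they do not constitute a gap.
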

\begin{proof}
Observe first that we need only to check the case when $z$ or $z'\to \partial \Omega$, as it follows by the definition of generic norm and considering that if we fix $z$, $z'\in \Omega$, then the following:
$$\mathrm{sup}_{w,w'\in M_\Omega(\mu)} \left|N_\Omega(z,\bar  z')^\mu-w\bar w'\right|<+\infty,$$
is always satisfied, since by Condition (A):
$$N_\Omega(z, \bar z')^\mu-w\bar w'=e^{-\Phi(z,w,\bar z',w')}\leq 1.$$
Thus, assume that $z\to \partial \Omega$. Then by definition of $M_\Omega(\mu)$ \eqref{defm}  we have $|w|^2\to N_\Omega(z,\bar  z)^\mu$.
Further, since $\mu\in W(\Omega)$ then $K_{\frac\mu\gamma}(z,\bar  z')$ is the reproducing kernel of the Hilbert space $\mathcal{H}_{\frac\mu\gamma}$. Thus, if $\{f_j\}_{j=0,1,\dots}$ is an orthonormal basis of $\mathcal{H}_{\frac\mu\gamma}$, we have:
$$K_{\frac\mu\gamma}(z, \bar z')=\sum_{j=0}^\infty f_j(z)\overline{f_j(z')},$$
and by Cauchy-Schwarz's inequality it follows:
$$
ww'K_{\frac\mu\gamma}(z,\bar  z')\leq |w|^2|w'|^2K_{\frac\mu\gamma}(z, \bar z)K_{\frac\mu\gamma}(z', \bar z').
$$
i.e., by the definition of generic norm \eqref{genericnorm}:
$$
ww'N_\Omega(z,\bar  z')^{-\mu}\leq c |w|^2|w'|^2N_\Omega(z, \bar z)^{-\mu}N_\Omega(z', \bar z')^{-\mu},
$$
where $c=V(\Omega)^{\frac1\gamma}$. Thus
$$\left|w\bar w'N_\Omega(z,\bar  z')^{-\mu}\right|\leq|w|^2 |w'|^2\left|cN_\Omega(z', \bar z')^{-\mu}N_\Omega(z, \bar z)^{-\mu}\right|\to|w'|^2\left|c N_\Omega(z', \bar z')^{-\mu}\right|,$$
and we are done.
\end{proof}

Observe that it follows from the computation in \cite{balancedCH} that the weighted Hilbert space:
\begin{equation}\label{hilbCH}
\mathcal{H}_{\alpha}=\left\{ \varphi\in\ol(M_{\Omega}(\mu)) \ | \ \, \int_{M(\mu)}\left(N^\mu_\Omega-|w|^2\right)^\alpha|\varphi|^2\frac{\omega( \mu )^{d+1}}{(d+1)!}<+\infty\right\},
\end{equation}
is not trivial for all $\alpha>d+1$. For completeness, we give here a semplified proof  of this fact in the following lemma:
\begin{lem}\label{existH}
The weighted Hilbert space $\mathcal{H}_\alpha$ given in \eqref{hilbCH} is not trivial for all $\alpha>d+1$.
\end{lem}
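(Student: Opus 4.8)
The plan is to prove nontriviality by exhibiting a single explicit nonzero element of $\mathcal{H}_\alpha$. The most naive candidate, the constant function $\varphi\equiv1$, turns out to be too optimistic: it will lie in $\mathcal{H}_\alpha$ only when $\mu\alpha$ is large relative to $\gamma$, which is not guaranteed. Instead I would test with the holomorphic monomials $\varphi(z,w)=w^m$ and show that for $m$ large enough the weighted $L^2$-norm is finite. Throughout I write $T=N_\Omega(z,\bar z)^\mu-|w|^2=e^{-\Phi}$, so that the weight in \eqref{hilbCH} is exactly $T^\alpha$, and I use coordinates $u=(z,w)$ on $M_\Omega(\mu)\subset\C^{d+1}$.

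The first and central step is to compute the volume form, which up to a universal positive constant equals $\det\bigl(\partial_{u_i}\partial_{\bar u_j}\Phi\bigr)\,dV_{\mathrm{eucl}}$. Writing $\Phi=-\log T$ and splitting the complex Hessian into its $z$--$z$ block, its $z$--$w$ entries, and the single $w$--$w$ entry $\partial_w\partial_{\bar w}\Phi=N_\Omega^\mu/T^2$, a Schur-complement reduction collapses the rank-one $w$-contributions. After the elementary identity $-\tfrac{\mu-1}{T}+\tfrac{\mu T}{T^2}=\tfrac1T$ the base block simplifies and one obtains the clean factorization
\[
\det\bigl(\partial_{u_i}\partial_{\bar u_j}\Phi\bigr)=\frac{\mu^{d}\,N_\Omega^{\mu(d+1)}}{T^{\,d+2}}\,\det G,\qquad G_{i\bar j}=\partial_{z_i}\partial_{\bar z_j}\bigl(-\log N_\Omega\bigr).
\]
This is the main obstacle: organizing the $(d+1)$-dimensional determinant so that it reduces to the $d$-dimensional base quantity $\det G$ times an explicit scalar. (One can sanity-check on the unit disc, where $N_\Omega=1-|z|^2$, $\det G=N_\Omega^{-2}$, and the formula returns $\mu\,N_\Omega^{2\mu-2}/T^{3}$.)

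The second step identifies the base factor. From $N_\Omega=(V(\Omega)K)^{-1/\gamma}$ one gets $-\log N_\Omega=\tfrac1\gamma\log K+\mathrm{const}$, hence $G=\tfrac1\gamma g_B$ is a constant multiple of the Bergman metric of $\Omega$. Both $\det g_B$ and the Bergman kernel $K$ transform by the factor $|\det\phi'|^2$ under $\phi\in\aut(\Omega)$, so their ratio is $\aut(\Omega)$-invariant; since $\Omega$ is homogeneous this ratio is constant, giving $\det G=c\,N_\Omega^{-\gamma}$ for some $c>0$.

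The final step is the integration. Combining the two previous steps, the squared norm of $\varphi=w^m$ is a positive constant times $\int T^{\alpha-d-2}|w|^{2m}\,N_\Omega^{\mu(d+1)}\,\det G\,dV_{\mathrm{eucl}}$. Applying Fubini and integrating first in $w$ over the disc $|w|^2<N_\Omega^\mu$, the substitution $t=|w|^2$ turns the inner integral into a Beta integral $B(\alpha-d-1,\,m+1)\,N_\Omega^{\mu(\alpha-d-1+m)}$; this is precisely where the hypothesis $\alpha>d+1$ enters, since it makes the first Beta parameter positive (and hence the inner integral finite). The remaining base integral becomes a constant times $\int_\Omega N_\Omega^{\,\mu(\alpha+m)-\gamma}\,dV_{\mathrm{eucl}}$. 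Choosing the integer $m\ge \gamma/\mu-\alpha$ makes the exponent nonnegative, and since $N_\Omega$ is bounded on $\Omega$ while $\Omega$ has finite Euclidean volume, this integral converges. Therefore $w^m\in\mathcal{H}_\alpha\setminus\{0\}$, so $\mathcal{H}_\alpha$ is nontrivial.
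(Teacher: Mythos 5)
Your proof is correct, and it takes a genuinely different route from the paper's. The paper tests the constant function $\varphi\equiv 1$: it asserts that, up to a positive constant, $\omega(\mu)^{d+1}/(d+1)!=(N_\Omega^\mu-|w|^2)^{-(d+2)}\,\omega_0^{d+1}/(d+1)!$, integrates out $w$ in polar coordinates (the Beta/Euler integral where $\alpha>d+1$ enters, exactly as in your argument), and then invokes a cited result of Roos--Lu--Yin to get convergence of the residual base integral $\int_\Omega N_\Omega^{\mu(\alpha-d-1)}\omega_0^d$. Your computation of the volume form is actually the more careful one: the determinant carries the extra factor $N_\Omega^{\mu(d+1)}\det G=c\,N_\Omega^{\mu(d+1)-\gamma}$ that the paper's displayed identity suppresses (your unit-disc sanity check confirms this), and that factor shifts the exponent in the base integral from $\mu(\alpha-d-1)$ to $\mu\alpha-\gamma$, so your hesitation about $\varphi\equiv1$ is legitimate as a matter of proof economy. (In fact one can verify case by case that $\mu(d+1)\geq\gamma-1$ for every Cartan domain and every $0<\mu\in W(\Omega)$, so $\mu\alpha-\gamma>-1$ does follow from $\alpha>d+1$ and the constant function does belong to $\mathcal{H}_\alpha$; but establishing that inequality is extra work.) Your choice of $w^m$ with $m\geq\gamma/\mu-\alpha$ neatly sidesteps both that inequality and the citation: the exponent of $N_\Omega$ becomes nonnegative, so convergence follows from boundedness of $N_\Omega$ and finiteness of the Euclidean volume of $\Omega$ alone. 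Both approaches isolate $\alpha>d+1$ as the condition making the fibre integral converge; yours buys a more elementary and self-contained conclusion at the cost of a slightly less canonical test function.
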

\begin{proof}
It is enough to prove that $1\in \mathcal{H}_\alpha$ for all $\alpha>d+1$. i.e. that:
$$
\int_{M(\mu)}\left(N^\mu_\Omega-|w|^2\right)^\alpha\frac{\omega( \mu )^{d+1}}{(d+1)!}<+\infty
$$
Observe first that up to the multiplication with a positive constant:
 $$\frac{\omega(\mu)^{d+1}}{(d+1)!}=\frac{1}{(N^{\mu}_\Omega-|w|^2)^{d+2}} \frac{\omega_0^{d+1}}{(d+1)!},$$
as it follows by a long but straightforward computation of the determinant of the metric $g(\mu)$.
Thus, we need to prove that:
$$
\int_{M_{\Omega}(\mu)}(N^{\mu}_\Omega-|w|^2)^{\alpha-(d+2)}\frac{\omega_0^{d+1}}{(d+1)!}<+\infty.
$$
Setting polar coordinates we get:
$$
\int_{M_{\Omega}(\mu)}(N^{\mu}_\Omega-|w|^2)^{\alpha-(d+2)}\frac{\omega_0^{d+1}}{(d+1)!}=\frac{\pi}{(d+1)!}\int_\Omega\int_0^{N^{\mu}_\Omega}(N^{\mu}_\Omega-\rho)^{\alpha-(d+2)}d\rho\,\omega_0^{d}.
$$
The integral:
$$
\int_0^{N^{\mu}_\Omega}(N^{\mu}_\Omega-\rho)^{\alpha-(d+2)}d\rho,
$$
is convergent iff $\alpha-(d+2)>-1$, i.e. iff $\alpha>d+1$. Setting $\alpha>d+1$ we get:
$$
\int_{M_{\Omega}(\mu)}(N^{\mu}_\Omega-|w|^2)^{\alpha-(d+2)}\frac{\omega_0^{d+1}}{(d+1)!}=\frac{\pi}{(d+1)!}\frac{1}{\alpha-d-1}
\int_\Omega\,N^{\mu_0(\alpha-d-1)}_\Omega \omega_0^d
$$
and by \cite[Prop. 2.1, p. 358]{roosformula} the integral on the right hand side is convergent whenever  $\mu(\alpha-d-1)>-1$, i.e. for $\alpha>1+d-\frac1\mu$.
\end{proof}

We are now in the position of proving our main theorem.

\begin{proof}[Proof of Theorem \ref{main}]
By the discussion at the begin of this paper we need to prove that Condition ${\rm (A)}$ and ${\rm (B')}$ are fulfilled. Set $\alpha>d+1$
 and let $a$, $b$ be the two geometrical invariants of $\Omega$, $r$ and $\gamma$ respectively its rank and its genus. In order to apply Lemma \ref{condA} and prove that Condition (A) holds true, observe that by \cite[Th. 2]{articwall}, $(M_\Omega(\mu),\alpha g(\mu))$ is projectively induced for all $\alpha\geq \frac{(r-1)a}{2\mu}$, which is always satisfied for $\alpha>d+1$ and $\mu\in W(\Omega)$. In fact by \eqref{wallachset} $\mu\geq \frac a2$, i.e. $\frac{(r-1)a}{2\mu}\leq r-1$, and since the dimension $d$ is related to $a$, $b$ and $r$ by the formula $d=\frac{r(r-1)}2a+r\,b+r,$
 we also have $r-1<d$. Further, the injectivity of the map $f\!:M_\Omega(\mu)\f \CP^\infty$ and the condition $f(M_\Omega(\mu))\subset\ell^2(\mathds{C})$ are guaranteed by Lemma \ref{injective}.

In order to prove that Condition ${\rm (B')}$ holds true, let $E$ be the set of all integers greater than $d+1$. For $\alpha\in E$ by Lemma \ref{existH} the Hilbert space $\mathcal{H}_\alpha$ defined in \eqref{hilbCH} is not trivial and, as proven in \cite[Th. 3.1]{fengtu}, $\epsilon_{\alpha g(\mu)}$ reads:
$$
\epsilon_{\alpha g(\mu)}(z,w)=\frac 1{\mu^d}\sum_{k=0}^d\frac{D^k\tilde \chi(d)}{k!}\left(1-\frac{||w||^2}{N_\Omega(z,\bar z)^\mu}\right)^{d-k}\frac{(\alpha-d+k-1)!}{(\alpha-d-2)!},
$$
for
$$
D^k\tilde \chi(d)=\sum_{j=0}^k{ k\choose j}(-1)^j\tilde \chi(d-j)
$$
and
$$
\tilde \chi(d-j)= \prod_{j=1}^r\frac{\Gamma(\mu (d-j)-\gamma+1+(j-1)\frac a2+1+b+(r-j)a)}{\Gamma(\mu (d-j)-\gamma+1+(j-1)\frac a2)},
$$
where $\Gamma$ is the usual $\Gamma$-function.
Observe that both the potential $\Phi(z,w)$ given in \eqref{diastM} and the reproducing kernel of $\mathcal{H}_\alpha$, admit a sesquianalytic extension on $M_\Omega(\mu)\times M_\Omega(\mu)$. Thus, it follows from \eqref{epsilon} that also $\epsilon_{\alpha g(\mu)}$ does and in particular it reads:
\begin{equation}\label{epsilonCH}
\epsilon_{\alpha g(\mu)}(z,w,z',w')=\frac 1{\mu^d}\sum_{k=0}^d\frac{D^k\tilde \chi(d)}{k!}\left(1-\frac{w\bar w'}{N_\Omega(z, \bar z')^\mu}\right)^{d-k}\frac{(\alpha-d+k-1)!}{(\alpha-d-2)!}.
\end{equation}
Since by \cite[Lemma 3.3, 3.4, 3.5]{fengtu} we have:
$$\frac{D^d\tilde \chi(d)}{d!}=\mu^d,\qquad
\frac{D^{d-1}\tilde \chi(d)}{(d-1)!}=\mu^{d-1}\frac{d\,(\mu(d+1)-\gamma)}2,
$$
it follows that we can write:
\begin{equation}\label{epsilonexp}
\epsilon_{\alpha g(\mu)}(z,w,\bar z',\bar w')=\alpha^{d+1}+B(z,w,\bar z',\bar w')\alpha^d+C(\alpha,z,w,\bar z',\bar w')\alpha^{d-1},
\end{equation}
with: $$
B(z,w,\bar z',\bar w')=-\frac{(d+1)(d+2)}2+\frac{d\,(\mu(d+1)-\gamma)}{2\mu}\left(1-\frac{w\bar w'}{N_\Omega(z, z')^\mu}\right).
$$
By Lemma \ref{limite}, since $B(z,w,\bar z',\bar w')$ depends only on $z,w,z',w'$ through $1-w\bar w'N_\Omega(z, z')^{-\mu}$, we have:
$$\mathrm{sup}_{z,z',w,w'\in M_\Omega(\mu)} |B(z,w,\bar z',\bar w')|<+\infty.$$
Thus, it remains to show that:
$$
\mathrm{sup}_{z,z',w,w'\in M_\Omega(\mu), \alpha\in E}|C(\alpha,z,w,\bar z',\bar w')|<+\infty.
$$
By \eqref{epsilonexp} we have:
$$
C(\alpha,z,w,\bar z',\bar w')=\left(\epsilon_{\alpha g}(z,w,\bar z',\bar w')-\alpha^{d+1}-B(z,w,\bar z',\bar w')\alpha^d\right)\alpha^{-(d-1)},
$$
where by \eqref{epsilon}, $\epsilon_{\alpha g}(z,w,\bar z',\bar w')-\alpha^{d+1}-B(z,w,\bar z',\bar w')\alpha^d$ is a polynomial of degree $\alpha^{(d-1)}$. Thus:
$$
\mathrm{sup}_{\alpha\in E}|C(\alpha,z,w,\bar z',\bar w')|<+\infty.
$$
The convergence for $z,z',w,w'\in M_\Omega(\mu)$ follows by noticing that the expression \eqref{epsilonCH} presents a finite sum of  factor depending by $z,w, z', w'$ only through $1-w\bar w'N_\Omega(z,\bar z')^{-\mu}$, which is bounded by Lemma \ref{limite}.
\end{proof}

\noindent{\bf Remark.}
Observe that according to \cite{englis}, the expression of $B(z,w,\bar z',\bar w')$ in the proof of Theorem \ref{main} is actually one over half the scalar curvature of $(M_{\Omega}(\mu),\alpha g(\mu))$ (see \cite{zedda} for a proof).

\end{document}